\newtheorem{theorem}{Theorem}[section]
\newtheorem{proposition}[theorem]{Proposition}
\theoremstyle{definition}
\newtheorem{definition}[theorem]{Definition}
\newtheorem{example}[theorem]{\textbf{Example}}
\newtheorem{algorithm}[theorem]{\textbf{Algorithm}}
\title{On decomposable and reducible integer matrices}
\author{C. Mariju\'an}
\address{Universidad de Valladolid\newline
\indent Departamento de Matem\'atica Aplicada}
\email{marijuan@mat.uva.es}
\author{I. Ojeda}
\address{Universidad de Extremadura\newline
\indent Departamento de Matem\'aticas}
\email{ojedamc@unex.es}
\author{A. Vigneron-Tenorio}
\address{Universidad de C\'{a}diz\newline
\indent Departamento de Matem\'aticas/INDESS (Instituto Universitario para el Desarrollo Social Sostenible)}
\email{alberto.vigneron@uca.es}
\thanks{The research of the first author is partially supported by the project PGC2018-096446-B-C21 (MINECO/FEDER, UE)}
\thanks{
The research of the second author is partially supported by the research groups FQM-024 (Junta de Extremadura/FEDER funds) and by the projects MTM2017-84890-P and PGC2018-096446-B-C21 (MINECO/FEDER, UE)}
\thanks{
The research of the third author is partially supported by Junta de Andaluc\'{\i}a research group FQM-366 and by the project MTM2017-84890-P (MINECO/FEDER, UE)}
\keywords{Integer matrix, Hermite normal form, decomposable matrix, reducible matrix, disconnected graph}
\subjclass{15B36 (Primary), 05C40, 05C50 (Secondary).}
\begin{document}

\date{\today}

\begin{abstract}
We propose necessary and sufficient conditions for an integer matrix to be decomposable in terms of its Hermite normal form. Specifically, to each integer matrix of maximal row rank without columns of zeros, we associate a symmetric whole matrix whose reducibility can be determined by elementary Linear Algebra, and which completely determines the decomposibility of the first one.
\end{abstract}

\maketitle

\section{Introduction}

For integer valued matrices, a notion of decomposability can be stated analogously to the real case (see Definition \ref{Def1}). The main difference here is that unimodularity is required for the transformation matrices. This is necessary to preserve the $\mathbb{Z}-$module structure generated by the columns of the matrix. Thus, if one wants to keep the group structure unchanged, pure Linear Algebra techniques cannot be applied to study the decomposability of an integer matrix.

Given an $m \times n$ integer matrix $A$, we can consider the submonid $S$ of $\mathbb{Z}^n$ generated by the non-negative combinations of the columns of $A$. A decomposition of $A$ yields a decomposition of $S$, and vice versa. In \cite{decomposable}, the authors deal with the computation of the decompositions of $S$, if possible, using the (integer) Hermite normal form as the main tool. Following this idea, we relate the decomposition of any integer matrix and the decomposition of its Hermite normal form (Proposition \ref{Prop1}). This leads to our main result (Theorem \ref{Th main}) which states that if $H$ is the Hermite normal form of an integer matrix $A$ of maximal row rank without columns of zeros, the necessary and sufficient condition for  $A$ to be decomposable is that the transpose of $H$ times $H$ is reducible in the usual sense (see Definition \ref{Def2}). Now, since the transpose of $H$ times $H$ is a symmetric matrix, we can adapt the combinatorial and Linear Algebra machinery to determine if $A$ is decomposable. Note that for a symmetric real matrix is possible to decide if it can be decomposed into a direct sum of smaller symmetric real matrices by analyzing the connectivity of a certain associated graph, which is closely related to the spectral properties of the graph. All this allows us to propose an algorithm (Algorithm \ref{Algo}) for the computation of the decomposition of the matrix $A$, if possible. 

Apart from practical computational considerations, we emphasize that, given an integer matrix $A$ of maximal row rank without columns of zeros, we are able to associate it to a weighted graph whose connectivity determines the decomposition of $A$. In particular, this can be used to determine the decomposition of any finitely generated commutative submonoid of $\mathbb{Z}^n$ as in \cite{decomposable}. Recall that the study of finitely generated commutative submonoids  of $\mathbb{Z}^n$ is of great interest due to its close relation with Toric Geometry (see \cite{Cox, Miller05} or \cite{Rosales}, and the references therein). Moreover, in this context, integer decomposable matrices have their own importance; to mention a couple illustrative of examples we observe that decomposable graphical models have associated integer decomposable matrices, as it can be deduced from \cite[Theorem 4.2]{Sturmfels}, and that decomposable semigroups correspond to direct products of certain algebraic (toric, in a wide sense) varieties.

\section{On decomposable and reducible integer matrices}

\begin{definition}\label{Def1}
Let $A \in \mathbb{Z}^{m \times n}$. We say that $A$ is \textbf{decomposable} if there exist a unimodular matrix $P$ and a permutation matrix $Q$ such that $P^{-1} A Q$ decomposes into a direct sum of matrices.
\end{definition}

The main aim of this note is to study decomposable matrices in terms of their Hermite normal form. To do this, we first recall the notion of Hermite normal form of an integer matrix.

\begin{definition}\label{DefHNF}
Let $A \in \mathbb{Z}^{m \times n}$ of rank $r$. The \textbf{Hermite normal form} of $A, \operatorname{HNF}(A)$, is the unique matrix $H=(h_{ij}) \in \mathbb{Z}^{m \times n}$ such that $A = PH$, for the unimodular matrix $P$, satisfying the following three conditions:
\begin{itemize}
\item[(a)] there exists a sequence of integers $1 \leq j_1 < \ldots < j_r$ such that for each $1 \leq i \leq r$ we have $h_{ij} = 0$ for all $j < j_i$ (row echelon form)
\item[(b)] for $1 \leq k < i \leq n$ we have $0 \leq h_{k\, j_i} < h_{i\, j_i}$ (the pivot element is the greatest along its column and the coefficients above are nonnegative).
\item[(c)] The last $m-r$ rows are zero.
\end{itemize}
We will say that $A$ is in Hermite normal form when $A = \operatorname{HNF}(A)$.
\end{definition}

There are well-known efficient algorithms for the computation of the Hermite normal form of an integer matrix (see, e.g. \cite{HNF}). In GAP (\cite{gap}),  the command \texttt{HermiteNormalFormIntegerMat} computes the Hermiten normal form of an integer matrix.

\begin{example}\label{Ejem1}
The Hermite normal form of \[A = \left(\begin{array}{ccccc} 2&-4&2&5&-6\\ 2&-2&2&5&-3\\ 0&-2&1&2&-3\end{array}\right)\] is \[\operatorname{HNF}(A) = \left(\begin{array}{rrr} 1&0&-2 \\ -1&1&0\\ -1&1&1 \end{array}\right)\, A = \left(\begin{array}{ccccc}2&0&0&1&0\\ 0&2&0&0&3 \\ 0&0&1&2&0\end{array}\right),\] where the matrix $\left( \begin{array}{rrr}
  1  &  0  & -2  \\ 
 -1  &  1  &  0  \\
 -1  &  1  &  1   
\end{array} \right)$  is the product of the elementary matrices transforming the matrix $A$ into its reduced row echelon form as above, in such way that the unimodular matrix in Definition \ref{DefHNF} is \[P=\left( \begin{array}{rrr}
  1  &  0  & -2  \\ 
 -1  &  1  &  0  \\
 -1  &  1  &  1   
\end{array} \right)^{-1}=\left( \begin{array}{rrr}
  1  & -2  &  2  \\ 
  1  & -1  &  2  \\
  0  & -1  &  1   
\end{array} \right).\]
\end{example}

The next propositions provide necessary and sufficient conditions for an integer matrix to be decomposable in terms of its Hermite normal form.

\begin{proposition}\label{Prop1}
Let $A \in \mathbb{Z}^{m \times n}$ and let $H = \operatorname{HNF}(A)$. Then, $A$ is decomposable if and only if $H$ is decomposable.
\end{proposition}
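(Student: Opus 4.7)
The plan is to exploit directly the fact that $A$ and $H$ differ only by a unimodular left multiplier, which is exactly the type of transformation allowed in Definition \ref{Def1}. Write $A = PH$ with $P \in \mathbb{Z}^{m \times m}$ unimodular, as provided by the definition of the Hermite normal form.

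For the ``if'' direction, suppose $H$ is decomposable, so there exist a unimodular $U$ and a permutation $Q$ with $U^{-1} H Q$ a direct sum of matrices. Set $P' = P U$; then $P'$ is unimodular (product of unimodular matrices), and
\[
(P')^{-1} A Q \;=\; U^{-1} P^{-1} (PH) Q \;=\; U^{-1} H Q,
\]
which is a direct sum, so $A$ is decomposable. For the ``only if'' direction, suppose $A$ is decomposable via a unimodular $V$ and a permutation $Q$, so $V^{-1} A Q$ is a direct sum. Set $U = P^{-1} V$; this is unimodular, and
\[
U^{-1} H Q \;=\; V^{-1} P \, H \, Q \;=\; V^{-1} A Q,
\]
which is a direct sum, so $H$ is decomposable.

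The argument is essentially a one-line composition of unimodular transformations; there is no real obstacle. The only thing to be careful about is to use the same permutation $Q$ on both sides (no column permutation is hidden in passing from $A$ to $H$, since $H$ is obtained from $A$ only by row operations), and to verify at each step that the transition matrix between the left factors is unimodular, which is immediate because the unimodular matrices form a group under multiplication.
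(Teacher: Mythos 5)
Your proof is correct and follows essentially the same route as the paper: writing $A = PH$ and composing unimodular left factors in both directions, with the same permutation matrix $Q$ carried through unchanged. No issues.
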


\begin{proof}
Let $P_1$ be a unimodular matrix such that $P_1^{-1} A = H$. If $A$ is decomposable, then $A_1 \oplus \ldots \oplus A_t = P_2^{-1} A Q = P_2^{-1} P_1 H Q = (P_1^{-1} P_2)^{-1} H Q$, for some unimodular matrix $P_2$ and permutation matrix $Q$. Now, since $P_1^{-1} P_2$ is unimodular, we have that $H$ is decomposable. Conversely, assume that $H$ is decomposable, so there exist a unimodular matrix $P_3$ and a permutation matrix $Q_1$ such that $P_3^{-1} H Q_1 = H_1 \oplus \ldots \oplus H_s$. Thus, $H_1 \oplus \ldots \oplus H_s = P_3^{-1} P_1^{-1} A Q_1 = (P_1 P_3)^{-1} A Q_1$ and we are done.
\end{proof}

In the following, we will use the symbol $\top$ to denote the transpose operation.

\begin{proposition}\label{Prop2}
Let $H$ be an integer $r \times n-$matrix in Hermite normal form of rank $r$. Then, $H$ is decomposable if and only if there exist permutation matrices $P$ and $Q$ such that $P^\top H Q$ decomposes into a direct sum of matrices.
\end{proposition}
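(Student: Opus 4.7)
The ($\Leftarrow$) direction is immediate because every permutation matrix is unimodular. For the ($\Rightarrow$) direction, my plan is to prove the stronger statement that \emph{every row of $H$ already has its support contained in a single block of a column partition}; this immediately lets us realize the decomposition by a pair of row/column permutations.

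First I would translate the hypothesis into lattice language. Assume $P_0^{-1} H Q_0 = M_1 \oplus \cdots \oplus M_t$ with $P_0$ unimodular and $Q_0$ a permutation matrix. Because the $\mathbb{Z}$-row-span is invariant under unimodular left multiplication, taking row-spans on both sides and then undoing the column permutation $Q_0$ shows that the row lattice $L$ of $H$ itself decomposes as $L = L_1 \oplus \cdots \oplus L_t$ over a partition $\{1,\ldots,n\} = B_1 \sqcup \cdots \sqcup B_t$ of the columns, where each $L_i$ is supported in $B_i$.

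The main step is then an application of the uniqueness of the Hermite normal form. For each $i$ let $H_i'$ denote the Hermite normal form of $L_i$ viewed as a lattice in $\mathbb{Z}^{B_i}$, zero-padded so as to sit inside $\mathbb{Z}^n$. I would concatenate the rows of the $H_i'$ and reorder them so that their pivot columns are in increasing order, producing a single matrix $H^\star$. Then I would verify that $H^\star$ satisfies all three conditions of Definition~\ref{DefHNF}: the row-echelon shape and the positivity of pivots are immediate from the construction, and the only non-trivial point is condition (b). At a pivot column $j_i \in B_k$, entries above the pivot that come from rows in the same block $B_k$ satisfy the required bound by the HNF property of $H_k'$, while entries that come from rows belonging to another block vanish identically, because those rows are supported away from $B_k$. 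Hence $H^\star$ is in HNF, and since $H$ also is and both span the same lattice $L$, the uniqueness of the Hermite normal form yields $H^\star = H$. Consequently each row of $H$ originates from some $H_i'$ and is supported in a single $B_i$.

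To finish, partition the row indices $\{1,\ldots,r\} = I_1 \sqcup \cdots \sqcup I_t$ by placing row $i$ into $I_k$ whenever its support lies in $B_k$. Let $P$ be the permutation matrix that groups the rows by the $I_i$'s and $Q$ the permutation matrix that groups the columns by the $B_i$'s; then $P^\top H Q$ is a direct sum of matrices. I expect the crux of the argument to be the HNF uniqueness step: the real content there is the vanishing of cross-block entries, which is forced by the disjointness of the supports of the $L_i$'s; everything else is essentially bookkeeping.
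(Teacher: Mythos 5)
Your proof is correct and follows essentially the same route as the paper's: both stack the Hermite normal forms of the individual summands, reorder the rows so the pivots are in increasing column order, and invoke the uniqueness of the Hermite normal form to identify the resulting matrix with $H$. Your lattice-theoretic phrasing and your explicit check of condition (b) of Definition~\ref{DefHNF} (cross-block entries above a pivot vanish because the supports are disjoint) spell out a point the paper leaves implicit, but the underlying argument is the same.
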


\begin{proof}
The sufficiency part is obvious since the permutation matrix $P$ is unimodular and $P^\top = P^{-1}$. Conversely, suppose that $H$ is decomposable, so there exist a unimodular matrix $R$ and a permutation matrix $Q$ such that $R^{-1} H Q = A_1 \oplus \ldots \oplus A_t.$ For simplicity, we will assume that $t=2$. Let $P_1$ and $P_2$ be unimodular matrices such that $H_1 := P_1^{-1} (A_1 \vert\, 0) Q^\top$ and $H_2 := P_2^{-1} (0\, \vert A_2) Q^\top$ are in Hermite normal form, and define the following matrix $$B := \left(\begin{array}{c} H_1 \\ \cline{1-1} H_2 \end{array}\right) = (P_1 \oplus P_2)^{-1} (A_1 \oplus A_2) Q^\top = (P_1^{-1} A_1 \oplus P_2^{-1} A_2) Q^\top.$$ Since the rank of $B$ is $r$, each row of $B$ contains a pivot element of $H_1$ or $H_2$. If we move the row containing the first (leftmost) pivot element to the first place, the row containing the second pivot element to the second place and so forth, the resulting matrix is necessarily in Hermite normal form. Thus, there exists a permutation matrix $P$ such that $P B = H$, by the uniqueness of the Hermite normal form. Therefore, $H = P B = P (P_1^{-1} A_1 \oplus P_2^{-1} A_2) Q^\top$ and we conclude that $P^\top H Q$ decomposes into $P_1^{-1} A_1 \oplus P_2^{-1} A_2$.
\end{proof}

\begin{example}\label{Ejem2}
By Proposition \ref{Prop2}, we can easily see that the matrix $A$ in Example \ref{Ejem1} is decomposable. Indeed, 
 \[\left(\begin{array}{ccc} 1 & 0 & 0 \\ 0 & 0 & 1 \\ 0 & 1 & 0 \end{array}\right) \operatorname{HNF}(A) \left(\begin{array}{ccccc} 1 & 0 & 0 & 0 & 0\\ 0 & 0 & 0 & 1 & 0 \\ 0& 1 & 0 &0 &0 \\ 0& 0 & 1 &0 &0 \\ 0& 0 & 0 &0 & 1 \end{array}\right) =  \left(\begin{array}{ccc|cc}2&0&1&0&0\\ 0&1&2&0&0 \\ \cline{1-5} 0&0&0&2&3\end{array}\right).\] 
\end{example}


For symmetric matrices, decomposability can be refined to the more restrictive notion of reducibility. This notion has a rich combinatorial nature, because of its relationship with graph theory, as we will see later on.

\begin{definition}\label{Def2}
A symmetric matrix $B \in \mathbb{Z}^{n \times n}$ is \textbf{reducible} if there exists a permutation matrix $Q$ such that $Q^\top B Q$ decomposes into a direct sum of square matrices. Otherwise $B$ is said to be \textbf{irreducible}.
\end{definition}

The following result gives a necessary and sufficient condition for an integer matrix (under reasonable conditions) to be decomposable in terms of the reducibility of a certain related symmetric matrix.

\begin{theorem}\label{Th main}
Let $A$ be an $r \times n$ integer matrix of rank $r$ with no column of zeros. Then $A$ is decomposable if and only if $\mathrm{HNF}(A)^\top \mathrm{HNF}(A)$ is reducible.
\end{theorem}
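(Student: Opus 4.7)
The plan is to combine Propositions \ref{Prop1} and \ref{Prop2} with the special shape of the Hermite normal form. By Proposition \ref{Prop1} it suffices to prove that $H := \mathrm{HNF}(A)$ is decomposable if and only if $H^\top H$ is reducible. The forward direction is a short computation: if Proposition \ref{Prop2} produces permutation matrices $P$ and $Q$ with $P^\top H Q = H_1 \oplus \cdots \oplus H_s$, then
\[
Q^\top (H^\top H) Q \;=\; (P^\top H Q)^\top (P^\top H Q) \;=\; (H_1^\top H_1) \oplus \cdots \oplus (H_s^\top H_s),
\]
which exhibits the reducibility of $H^\top H$.

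For the converse, assume $Q^\top (H^\top H) Q = B_1 \oplus \cdots \oplus B_s$ with $s \geq 2$. Set $M := HQ$, partition the column indices of $M$ into $I_1, \ldots, I_s$ according to the blocks, and let $V_b \subseteq \mathbb{Q}^r$ denote the $\mathbb{Q}$-span of the columns of $M$ indexed by $I_b$. The block-diagonal structure of $M^\top M$ says exactly that $V_a \perp V_b$ for $a \neq b$ with respect to the standard inner product. Since $\mathrm{rank}(H) = r$, the sum $V_1 + \cdots + V_s$ equals $\mathbb{Q}^r$, so $\mathbb{Q}^r = V_1 \oplus \cdots \oplus V_s$ orthogonally.

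The heart of the argument is the claim that each standard basis vector $e_i$ of $\mathbb{Q}^r$ belongs to one of the $V_b$'s; I expect this to be the main obstacle, since integer orthogonality does not in general force disjoint supports, and the Hermite shape must be exploited. The proof proceeds by induction on $i$. The $i$-th pivot column of $H$, namely $c_i = \sum_{k=1}^{i} h_{k, j_i}\, e_k$ with $h_{i,j_i} > 0$, appears as a column of $M$ and therefore lies in some $V_a$. For each $k < i$ with $h_{k,j_i} \neq 0$, the induction hypothesis places $e_k$ in some $V_{a(k)}$; if $a(k) \neq a$, then $V_{a(k)} \perp V_a$ would force $\langle e_k, c_i\rangle = h_{k, j_i} = 0$, a contradiction, so $a(k) = a$. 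Consequently $\sum_{k<i} h_{k,j_i}\,e_k \in V_a$, and rearranging $c_i$ yields $h_{i,j_i}\, e_i \in V_a$; since $h_{i, j_i} > 0$ we obtain $e_i \in V_a$ and set $a(i) := a$.

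With the claim established, each $V_b$ coincides with the coordinate subspace $\mathrm{span}\{e_i : a(i) = b\}$; let $J_b$ denote the corresponding set of row indices. The hypothesis that $A$ (equivalently $H$, and hence $M$) has no zero column forces $V_b \neq 0$ and therefore $J_b \neq \emptyset$ for every $b$. Since every column of $M$ indexed by $I_b$ is supported on the rows indexed by $J_b$, a row permutation $P$ that lists the rows in the order $J_1, J_2, \ldots, J_s$ turns $P^\top H Q = P^\top M$ into a non-trivial direct sum of $s \geq 2$ matrices. Proposition \ref{Prop2} then yields decomposability of $H$, and Proposition \ref{Prop1} transfers it back to $A$.
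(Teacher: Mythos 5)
Your proof is correct. The forward implication is identical to the paper's, and the converse shares its overall skeleton: block-diagonality of $Q^\top (H^\top H) Q$ gives orthogonality of the corresponding column groups of $HQ$, which is then upgraded to disjoint row supports so that a row permutation exhibits a direct sum. But the heart of the converse is carried out by a genuinely different mechanism. The paper restricts attention to the $r$ pivot columns of $H$, shows by a rank argument that they cannot all land in one block (this is where the no-zero-column hypothesis enters there), and then invokes the sign condition of the Hermite normal form: the pivot columns have nonnegative entries, so orthogonality of nonnegative vectors forces disjoint supports; invertibility of the resulting triangular pieces then propagates the support condition to the non-pivot columns. Your induction showing $e_i \in V_{a(i)}$ uses only the echelon shape (the $i$-th pivot column is supported on rows $1,\dots,i$ with $h_{i,j_i} \neq 0$) and never the nonnegativity of the entries above the pivots; once the $V_b$ are identified as coordinate subspaces, pivot and non-pivot columns are handled uniformly. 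Your route is therefore slightly more general --- it works for any full-row-rank matrix in column-permuted row echelon form, irrespective of the normalization of the entries above the pivots --- and it avoids the separate treatment of non-pivot columns, at the cost of the dimension count identifying each $V_b$ with $\operatorname{span}\{e_i : i \in J_b\}$ (which you should spell out, but it is routine since $\sum_b |J_b| = r = \sum_b \dim V_b$ and $|J_b| \le \dim V_b$). Both arguments use the no-zero-column hypothesis at the analogous place, namely to guarantee that every block is nonempty so the resulting direct sum is nontrivial.
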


\begin{proof}
Let $H$ be the Hermite normal form of $A$. By Proposition \ref{Prop1}, we may assume that $A = H$. Now, if  $H$ is decomposable, by Proposition \ref{Prop2}, there exist permutation matrices $P$ and $Q$ such that $P^\top H Q = H_1 \oplus \ldots \oplus H_t$, then $Q^\top (H^\top H) Q = (Q^\top H^\top P) (P^\top H Q) = (P^\top H Q)^\top (P^\top H Q) = (H_1^\top H_1) \oplus \ldots \oplus (H_t^\top H_t)$. Conversely, if $H^\top H$ is reducible, without loss of generality, there exists a permutation matrix $Q$ such that \[Q^\top (H^\top H) Q =(H Q)^\top H Q = H_1 \oplus H_2\] where $H_1 \in \mathbb{Z}^{s \times s}, \; H_2 \in \mathbb{Z}^{(n-s) \times (n-s)}$ are symmetric and $1\leq s<n$. Let $R \in \mathbb{Z}^{n \times r}$ be the submatrix of $Q$ such that $HR$ is the submatrix of $HQ$ consisting of the pivot columns of $H$. Clearly $HR$ has rank $r$ and $(HR)^\top HR$ is a submatrix of $(H Q)^\top H Q = H_1 \oplus H_2$. If $(HR)^\top HR$ is a submatrix of $H_1$ ($H_2$, respectively),  then $H_1$ ($H_2$, respectively)  has rank $r$ and necessarily $H_2$ ($H_1$, respectively) is zero because $H_1 \oplus H_2 = Q^\top (H^\top H) Q$ has rank $r$. Therefore $H^\top H$ has its $i-$th row and its $i-$th column of zeros, for some $i \in \{1, \ldots, n\}$. Since the element of $H^\top H$ in position $(i,i)$ is the sum of the squares of the $i-$th column of $H$, we conclude that $H = \operatorname{HNF}(A)$ has a column of zeros which leads to a contradiction with the fact that $A$ has no column of zeros. Thus, $(HR)^\top HR = H'_1 \oplus H'_2$ for some submatrices $H'_i$ of $H_i,\ i = 1,2$, that is to say, 
\begin{align*} \left(\begin{array}{c|c} H'_1 & 0 \\ \cline{1-2} \vspace{-.3cm} \\ 0 & H'_2 \end{array}\right) & = H'_1 \oplus H'_2 = R^\top (H^\top H) R = \left(\begin{array}{c} R_1^\top \\ \cline{1-1} \vspace{-.25cm} \\ R_2^\top \end{array}\right) H^\top H\ \left(R_1 \vert R_2 \right) \\ & = \left(\begin{array}{c|c} R_1^\top H^\top H R_1 & R_1^\top H^\top H R_2 \\ \cline{1-2} \vspace{-.25cm} \\R_2^\top H^\top H R_1 & R_2^\top H^\top H R_2 \end{array}\right) .\end{align*} In particular, $ (H R_1)^\top H R_2 =  (H R_2)^\top H R_1 = 0$. Now, taking into account that,  by the definition of $H$, $HR$ has nonnegative coordinates, it follows that the columns of $H$ corresponding to $R_1$ have zeros in the places corresponding to $R_2$ and vice versa; that is to say $R_1^\top H^\top = (S_1 \mid 0)$ and $R_2^\top H^\top = (0 \mid S_2)$, where $S_1$ and $S_2$ are invertible matrices (actually $S_1$ and $S_2$ are invertible lower triangular matrices up to permutation of their rows). Consider now a non-pivot column  $\mathbf{h}$ of $H$. Since $Q^\top H^\top \mathbf{h}$ is either a column of $\binom{H_1}0$ or a colummn of $\binom{0}{H_2}$, then $R_1^\top H^\top \mathbf{h} = 0$ or $R_2^\top H^\top \mathbf{h} = 0$. Thus, $(S_1\mid 0)\, \mathbf{h} = 0$ or  $(0 \mid S_2)\, \mathbf{h} = 0$ and, since $S_1$ and $S_2$ are invertible, we conclude that $\mathbf{h}$ has zeros in the places corresponding to $R_1$ or corresponding to $R_2$. Therefore, $HQ$ decomposes into a direct sum of matrices up to permutation of its rows, which exactly means that $H$ is decomposable.
\end{proof}

\begin{example}\label{Ejem3}
We already know that the matrix $A$ in Example \ref{Ejem1} is decomposable. Thus, in the light of Theorem \ref{Th main}, the symmetric matrix $\operatorname{HNF}(A)^\top \operatorname{HNF}(A)$ must be reducible. Indeed,
\[B:=\operatorname{HNF}(A)^\top \operatorname{HNF}(A) = \left(\begin{array}{ccccc}  4 & 0 & 0 & 2 & 0 \\ 0 & 4 & 0 & 0 & 6\\ 0 & 0 & 1 & 2 & 0 \\ 2  & 0 & 2 & 5 & 0 \\ 0 & 6 & 0 & 0 & 9 \end{array}\right),\] and \[Q^\top B Q = \left(\begin{array}{ccc|cc}  4 & 0 & 2 & 0 & 0 \\ 0 & 1 & 2 &  0 & 0 \\ 2 & 2 & 5 & 0 & 0 \\ \cline{1-5} 0 & 0 & 0 & 4 & 6\\  0 & 0 & 0 & 6 & 9 \end{array}\right),\quad \text{with}\quad Q = \left(\begin{array}{ccccc}  1 & 0 & 0 & 0 & 0 \\ 0 & 0 & 0 & 1 & 0\\ 0 & 1 & 0 & 0 & 0 \\ 0 & 0 & 1 & 0 & 0 \\ 0 & 0 & 0 & 0 & 1 \end{array}\right).\]
\end{example}


An important advantage of dealing with symmetric matrices is their strong combinatorial meaning: any symmetric matrix $B = (b_{ij}) \in \mathbb{Z}^{n \times n}$ can be considered as the adjacency matrix of an (undirected) weighted graph $\mathcal{G}_B$ with $n$ vertices $\{v_1, \ldots, v_n\}$, where the weight of the edge $\{v_i, v_j\}$ is $b_{ij}$, and vice versa.

Recall that with our notation, the \textbf{degree} of the vertex $v_i$ is \[d_i := \sum_{j=1}^n b_{ij} = \sum_{\{v_i,v_j\} \in \mathcal{G}_B} b_{ij},
\] and the \textbf{Laplacian matrix} of $\mathcal{G}_B$ is $D - B$ where $D$ is the diagonal matrix with diagonal entries $(d_1, \ldots, d_n)$.

Observe that, \emph{the matrix $B$ is reducible if and only if the graph $\mathcal{G}_B$ is not connected}. Thus, we can study the reducibility of $B$ by means of $\mathcal{G}_B$. To do this, we will take advantage of the following result.

\begin{proposition}\label{key proposition}
Let $\mathcal{G}$ be a weighted graph on $n$ vertices. Then, $\mathcal{G}$ has $t$ connected components if and only if the Laplacian matrix of $\mathcal{G}$ has rank $n-t$. In this case, the connected components of $\mathcal{G}$ are completely determined by the reduced row echelon form of the Laplacian matrix of $\mathcal{G}$.
\end{proposition}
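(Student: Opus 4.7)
My plan is to exploit the classical quadratic form identity for the Laplacian together with the block-diagonal structure induced by the connected components. Writing $L := D - B$, a direct computation using $d_i = \sum_j b_{ij}$ and the symmetry of $B$ gives the standard identity
\[\mathbf{x}^\top L \mathbf{x} \;=\; \sum_{i<j} b_{ij}(x_i - x_j)^2,\]
which is non-negative when the weights are non-negative (the setting in which the proposition will be applied via $B = H^\top H$; in the signed case simple $4$-cycle examples with alternating signs show the rank can drop further, so this positivity is genuinely used). Permuting the vertices to group each connected component together turns $L$ into a block-diagonal matrix with one block per component; this preserves rank and only reorders columns of the RREF, so it suffices to analyse each component separately.

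For the rank statement, on a single connected component the identity above combined with $L\mathbf{1}=0$ shows that any $\mathbf{x}\in\ker L$ must satisfy $x_u = x_v$ along every edge, so by connectivity $\mathbf{x}$ is a scalar multiple of $\mathbf{1}$. Thus $\dim\ker L = 1$ per component, summing to $\dim\ker L = t$ and $\operatorname{rank} L = n-t$; this actually determines $t$ from $\operatorname{rank} L$ uniquely, yielding both implications of the stated equivalence.

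For the RREF claim, let $L^{\mathrm{rref}}$ denote the reduced row echelon form of $L$, which shares its null space. Since $\operatorname{rank} L = n-t$, there are exactly $t$ non-pivot columns, at positions $j_1,\ldots,j_t$, and the canonical null-space basis $\mathbf{v}_1,\ldots,\mathbf{v}_t$ is obtained by setting the $k$-th free coordinate to $1$ and the other free coordinates to $0$, the pivot coordinates being then read off (up to sign) from $L^{\mathrm{rref}}$. Because each $\mathbf{v}_k \in \ker L$ must be constant on every component, no component can contain two of the free indices (they would force $\mathbf{v}_k$ to take both values $1$ and $0$ on the same component), so by pigeonhole each component hosts exactly one free index; this identifies $\mathbf{v}_k$ with the indicator vector of the component of $j_k$, and the connected components are then directly readable from $L^{\mathrm{rref}}$ as the supports of these $\mathbf{v}_k$. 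The main obstacle is the one-dimensional-kernel step on each connected component, which crucially uses non-negativity of the weights; once that positive semi-definite input is in place, the rest is routine null-space bookkeeping.
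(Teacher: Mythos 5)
Your proof is correct under the non-negativity hypothesis you impose, and it takes a genuinely different route from the paper's. For the rank statement the paper simply cites the matrix-tree theorem, whereas you give a self-contained argument: the identity $\mathbf{x}^\top L \mathbf{x} = \sum_{i<j} b_{ij}(x_i - x_j)^2$ makes $L$ positive semidefinite, so kernel vectors are constant along edges, hence constant on components, giving $\dim\ker L = t$ and $\operatorname{rank} L = n-t$. For the second statement the paper asserts that the reduced row echelon form of a connected graph's Laplacian is the identity block bordered by a column of $-1$'s and reads the components off the supports of the non-pivot columns; you derive the same conclusion from the canonical null-space basis of the RREF plus the pigeonhole observation that each component carries exactly one free column. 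The two arguments land in the same place, but yours is more elementary (no external citation) and, importantly, makes explicit a hypothesis the paper leaves silent: for signed weights the equivalence fails, and your $4$-cycle with alternating weights $\pm 1$ is a valid counterexample (it is connected yet its Laplacian has rank $n-2$), so non-negativity of the off-diagonal weights is genuinely needed.

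Two caveats. First, your parenthetical claim that the application setting $B = H^\top H$ automatically has non-negative weights is not justified: condition (b) of Definition \ref{DefHNF} constrains only the pivot columns of $H$, so a non-pivot column may contain negative entries --- for instance $H = (1 \ \ {-1})$ is in Hermite normal form and $H^\top H$ has off-diagonal entry $-1$. This does not affect the correctness of your proof of the proposition as a statement about non-negatively weighted graphs, but the restriction cannot be dismissed as automatic in the paper's intended application. Second, a small imprecision: simultaneously permuting rows and columns of $L$ preserves its rank and kernel but does not in general merely reorder the columns of its RREF (the pivot pattern can change); fortunately your final paragraph works directly with the RREF and kernel of the unpermuted $L$, so nothing essential depends on that remark.
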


\begin{proof}
The first statement follows from the well known matrix-tree theorem (see, e.g. \cite[Section 1]{Merris} and the references therein). Let us analyze the second statement with a little more detail. First, we observe that the  Laplacian matrix of a weighted graph on $n$ vertices is an order $n$ symmetric matrix of rank $n-1$ whose columns sum to zero. So, its reduced row echelon form is equal to \[\left(\begin{array}{rrrrr} 1 & 0 & \ldots & 0 & -1 \\ 0 & 1 & \ldots & 0 & -1 \\ \vdots & \vdots & \ddots & \vdots & \vdots \\ 0 & 0 & \ldots & 1 & -1 \\ 0 & 0 & \ldots & 0 & 0 \end{array}\right).\] Thus, if $V$ is the reduced row echelon of the Laplacian matrix of an  undirected simple graph on $n$ vertices, then if the $j-$th column, $\mathbf{v}_j$, of $V$ is not a pivot column, the set of vertices of the connected component containing the vertex $j$ is $\{j\} \cup \operatorname{supp}(\mathbf{v}_j)$, where $\mathrm{supp}(\mathbf{v}_j)$ denotes the support of $\mathbf{v}_j$, that is, $\mathrm{supp}(\mathbf{v}_j) = \{ i \mid v_{ij} \neq 0\}$.
\end{proof}
\begin{example}
Consider the weighted graph $\mathcal{G}$ with vertex-set $\{1,2,3,4,5\}$ and edges $\{1,4\}, \{2,5\},\{3,4\}$ with respective weights $2, 6, 2$.
The Laplacian matrix of $\mathcal{G}$ is
\[\left(\begin{array}{rrrrr} 2 & 0 & 0 & -2 & 0 \\ 0 & 6 & 0 & 0 & -6\\ 0 & 0 & 2 & -2 & 0 \\-2  & 0 & -2 & 4 & 0 \\ 0 & -6 & 0 & 0 & 6\end{array}\right)\]
and its reduced row echelon form is 
\[R := \left(\begin{array}{rrrrr}  
1 & 0 & 0 & -1 & 0 \\  0 & 1 & 0 & 0 & -1 \\  0 & 0 & 1 & -1 & 0 \\ 
   0 & 0 & 0 & 0 & 0 \\  0 & 0 & 0 & 0 & 0 
\end{array}\right).\]
Now, we can read from $R$ that $\mathcal{G}$ has the following two connected components: the subgraph with vertices $\{1,3,4\}$ and the subgraph with vertices $\{2,5\}$.
\end{example}

Finally, we easily check if a full row rank integer matrix $A$ without columns of zeros is decomposable and, in affirmative case, compute a decomposition of $A$ into a direct sum of Hermite normal form matrices. 

The following algorithm shows how to check the decomposibility of $A$ by means of Laplacian matrices.

\begin{algorithm}\label{Algo} \textbf{HNF-Decomposition.}\par
\noindent\textsc{Input:} A $r \times n$ integer matrix $A$ of rank $r$ with no column of zeros.\\
\noindent\textsc{Output:} A unimodular matrix $P$ and permutation matrix $Q$ such that $P^{-1} A Q = H_1 \oplus \ldots \oplus H_t$ with $H_i$ into Hermite normal form for every $i$.
\begin{enumerate}
\item Set $H = \operatorname{HNF}(A)$ and let $P_0$ be a unimodular matrix such that $P_0^{-1} A = H$.
\item Set $B = H^\top H$. 
\item Let $D$ be the diagonal matrix whose the elements in the main diagonal are entries of $B\, (1\ 1\ \ldots\ 1)^\top$ , and define $L = D - B$.
\item Let $R$ be the reduced row echelon form of $L$ and let $k=0$.
\item For $j=1$ to $n$ do
\begin{enumerate}
\item If the $j-$th column of $R$ is a non-pivot column, then
\begin{enumerate}
\item Set $k = k+1$
\item Let $Q_k$ be the matrix whose columns are $\{\mathbf{e}_i\} \cup \{\mathbf{e}_\ell \mid \ell \in \operatorname{supp}(\mathbf{v}_j)\}$, where $\mathbf{e}_i$ is the vector that has the $i-$th coordinate equal to $1$ and all the other coordinates equal to $0$.
\end{enumerate}
\end{enumerate}
\item Set $Q = (Q_1 \vert \ldots \vert Q_r)$, where $r = n - \operatorname{rank}(L)$.
\item Let $P_1$ be the unimodular matrix such that $P_1^{-1} (H Q) = \operatorname{HNF}(H Q)$.
\item Return $P = P_0 P_1$ and $Q$.
\end{enumerate}
\end{algorithm}

Observe that steps (1)-(6) provide unimodular matrices $P_0$ and $Q$ such that $P_0^{-1} A Q = A_1 \oplus \ldots \oplus A_t$. If $t=1$ then that $A$ is not decomposable; in this case $A_1 = \operatorname{HNF}(A)$ and $Q$ is the identity matrix. Otherwise, if $A$ is decomposable; we cannot guarantee that the matrices $A_i,\ i = 1, \ldots, t$, are in Hermite normal form. However, since $\operatorname{HNF}(A_1) \oplus \ldots \oplus \operatorname{HNF}(A_t) = \operatorname{HNF}(A_1 \oplus \ldots \oplus A_t)$ by the uniqueness of the Hermite normal form, step (7) provides the matrix $P_1$ such that $P_1^{-1} P_0^{-1} A Q$ is in Hermite normal form as desired.




\begin{thebibliography}{99}

\bibitem{HNF}
\textsc{Clement, P.; Stein, W.} 
\emph{Fast computation of Hermite normal forms of random
integer matrices}. Journal of Number Theory, \textbf{130} (2010), 1675--1683.

\bibitem{Cox}
\textsc{Cox, D.; Little, B. and Schenk, H.} \emph{Toric varieties}, Grad. Studies Math., 124, Amer. Math. Soc. (2011)

\bibitem{decomposable}
\textsc{Garc\'{\i}a-Garc\'{\i}a, J.I.;Moreno-Fr\'ias M.A.; Vigneron-Tenorio, A.}
\emph{On decomposable semigroups and applications}. Journal of Symbolic Computation
\textbf{58} (2013), 103--116.

\bibitem{gap}
The GAP~Group, \emph{GAP -- Groups, Algorithms, and Programming,
	Version 4.8.8};
2017,
\url{https://www.gap-system.org}.

\bibitem{Merris}
\textsc{Merris, R.}
\emph{Laplacian matrices of graphs: a survey}.
Linear Algebra Appl. \textbf{197, 198} (1994), 143--176. 

\bibitem{Meyer}
\textsc{Meyer, C.} 
\emph{Matrix analysis and applied linear algebra}. Society for Industrial and Applied Mathematics (SIAM), Philadelphia, PA, 2000.

\bibitem{Miller05}
\textsc{Miller, E.; Sturmfels, B.}
\newblock \emph{Combinatorial Commutative Algebra}.
\newblock Vol. 227 of Graduate Texts in Mathematics. Springer, New York. 2005.

\bibitem{Rosales}
\textsc{Rosales, J.C.; García-Sánchez, P.A.}
\newblock \emph{Finitely generated commutative monoids.} 
\newblock Nova Science Publishers, Inc., Commack, NY, 1999. 

\bibitem{Sturmfels}
\textsc{Geiger, D.; Meek, C.; Sturmfels, B.} 
\newblock \emph{On the toric algebra of graphical models}.
\newblock Ann. Statist. \textbf{34}(3), (2006), 1463-1492.


\end{thebibliography}
\end{document}